\documentclass[letterpaper,10pt]{article}

\usepackage[margin=0.5in]{geometry}
\usepackage{fullpage}
\usepackage{enumerate}
\usepackage{authblk}
\usepackage[colorinlistoftodos]{todonotes}
\usepackage{verbatim}
\usepackage{section, amsthm, textcase, setspace, amssymb, lineno, amsmath, amssymb, amsfonts, latexsym, fancyhdr, longtable, ulem, mathtools}
\usepackage{epsfig, graphicx, pstricks,pst-grad,pst-text,tikz, tkz-berge,colortbl}
\usepackage{epsf}
\usepackage{graphicx, color}
\usepackage{float}
\usepackage[rflt]{floatflt}
\usepackage{amsfonts}
\usepackage{latexsym,enumitem}
\usetikzlibrary{fit,matrix,positioning}
\usepackage{pdflscape}
\usetikzlibrary{decorations.pathreplacing}
\usepackage{mathrsfs}
\usepackage{makecell}
\usepackage{kbordermatrix}

\definecolor{darkgreen}{rgb}{0, 0.5, 0}

\DeclareMathOperator{\rank}{rank}

\newtheorem{theorem}{Theorem}
\newtheorem{lemma}{Lemma}

\newtheorem{definition}{Definition}

\newtheorem{Ex}{Example}

\newtheorem*{theorem*}{Theorem}
\newtheorem{remark}{Remark}

\newcommand\addvmargin[1]{ \node[fit=(current bounding box),inner ysep=#1,inner xsep=0]{};}
\newcommand{\ind}{{\rm ind \hspace{.1cm}}}
\newcommand{\C}{\mathbb{C}}


\newcommand\blfootnote[1]{%
  \begingroup
  \renewcommand\thefootnote{}\footnote{#1}%
  \addtocounter{footnote}{-1}%
  \endgroup
}

\begin{document}

\title{\bf Classification of Frobenius, two-step solvable Lie poset algebras}
\author{Vincent E. Coll, Jr.$^{*}$, Nicholas Mayers$^{**}$, and Nicholas Russoniello$^{***}$ }
\maketitle

\noindent
\textit{$^*$Dept. of Mathematics, Lehigh University, Bethlehem, PA, USA:  vec208@lehigh.edu \textup(corresponding author\textup)}\\
\textit{$^{**}$Dept. of Mathematics, Lehigh University, Bethlehem, PA, USA:  nwm215@lehigh.edu}\\
\textit{$^{***}$Dept. of Mathematics, Lehigh University, Bethlehem, PA, USA:  nvr217@lehigh.edu}




\begin{abstract}
\noindent
We show that the isomorphism class of a two-step solvable Lie poset subalgebra of a semisimple Lie algebra is determined by its dimension. We further establish that all such algebras are absolutely rigid. 
\end{abstract}
\blfootnote{\textit{E-mail addresses:} vec208@lehigh.edu (V. Coll), nwm215@lehigh.edu (N. Mayers), nvr217@lehigh.edu (N. Russoniello)}


\noindent
\textit{Mathematics Subject Classification 2010}: 17B20, 05E15

\noindent 
\textit{Key Words and Phrases}: Frobenius Lie algebra, deformation

\section{Introduction}

\textit{Notation}:  Throughout, we assume that \textbf{k} is an an algebraically closed field of characteristic zero -- which we may assume is $\C$.
\bigskip

If $\mathcal{P}$ is a finite poset with partial order $\preceq$, then the associative \textit{poset} (or \textit{incidence}) \textit{algebra} $A=A(\mathcal{P}, \rm{\textbf{k}})$ is the span over \textbf{k} of elements $e_{ij}$, $i\preceq j,$ with multiplication given by setting $e_{ij}e_{j^\prime k}=e_{ik}$ if $j=j^\prime$ and 0 otherwise. One may define the commutator bracket $[a,b] =ab-ba$ on $A=$($A$, $\rm{\textbf{k}})$ to yield the ``Lie poset algebra'' 
$\mathfrak{g}(\mathcal{P})=\mathfrak{g}(\mathcal{P}, \rm{k})$.  If $|\mathcal{P}|=N$, in which case we may assume that $\mathcal{P}=\{1,\hdots,N\}$ with partial order compatible with the linear order, then $A$ and $\mathfrak{g}$ may be viewed as subalgebras of the algebra of all upper-triangular matrices in
$\mathfrak{gl}(N)$. See Figure~\ref{first}, where the left panel illustrates the Hasse diagram of $\mathcal{P}=\{1,2,3,4\}$ with $1\preceq 2\preceq 3,4$. In the right panel, one has the matrix form defining both $A(\mathcal{P})$ and $\mathfrak{g}(\mathcal{P})$  -- each ``generated'' as above by $\mathcal{P}$.  The possible non-zero entries from \textbf{k} are marked by $\ast$'s.   

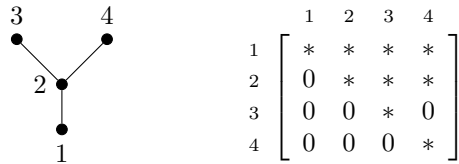
\begin{figure}[H]\label{first}
$$\begin{tikzpicture}[scale=0.6]
	\node (1) at (0, 0) [circle, draw = black, fill = black, inner sep = 0.5mm, label=below:{1}]{};
	\node (2) at (0, 1) [circle, draw = black, fill = black, inner sep = 0.5mm, label=left:{2}] {};
	\node (3) at (-1, 2) [circle, draw = black, fill = black, inner sep = 0.5mm, label=above:{3}] {};
	\node (4) at (1, 2) [circle, draw = black, fill = black, inner sep = 0.5mm, label=above:{4}] {};
	\node (5) at (5, 1.1) {\kbordermatrix{
    & 1 & 2 & 3 & 4  \\
   1 & * & * & * & * \\
   2 & 0 & * & * & * \\
   3 & 0 & 0 & * & 0 \\
   4 & 0 & 0 & 0 & * \\
  }};
    \draw (1)--(2);
    \draw (2)--(3);
    \draw (2)--(4);
\end{tikzpicture}$$
\caption{Hasse diagram of $\mathcal{P}=\{1,2,3,4\}$, $1\preceq 2\preceq 3,4$ and matrix representation }\label{fig:inc} 
\end{figure}

In \textbf{\cite{CG}}, Coll and Gerstenhaber focus on the restriction of  $\mathfrak{g}(\mathcal{P})$ to $A_{|\mathcal{P}|-1}=\mathfrak{sl}(|\mathcal{P}|)$ and denote these algebras by $\mathfrak{g}_A(\mathcal{P})$. They show that both $\mathfrak{g}(\mathcal{P})$ and $\mathfrak{g}_A(\mathcal{P})$ are precisely the subalgebras lying between the Borel subalgebra of upper-triangular matrices and the Cartan subalgebra of diagonal matrices in $\mathfrak{gl}(|\mathcal{P}|)$ and $\mathfrak{sl}(|\mathcal{P}|)$, respectively. The authors further suggest that this model may be used to define Lie poset algebras more generally;  that is, a \textit{Lie poset algebra} is one that lies between a Cartan and associated Borel subalgebra of a simple Lie algebra. 

Following the suggestion of \textbf{\cite{CG}}, Coll and Mayers \textbf{\cite{pBCD}} extend the notion of Lie poset algebra to the other classical families: $B_n=\mathfrak{so}(2n+1)$, $C_n=\mathfrak{sp}(2n)$, and $D_n=\mathfrak{so}(2n)$,  by providing the definitions of posets of types B, C, and D which encode the standard matrix forms of Lie poset algebras in types B, C, and D, respectively.  There are no conditions on the ``generating'' posets in either $\mathfrak{gl}(n)$ or in $\mathfrak{sl}(n)$. This is not true for types B, C, and D (see Definition \ref{def:bcd}). For the exceptional cases, the Lie poset algebra definition still applies -- even if a generating poset may not be present. 

Here, we go one step further and define Lie poset algebras to be subalgebras of a semisimple Lie algebra lying between a Borel and corresponding Cartan subalgebra. In particular, we are concerned with such Lie poset algebras which are Frobenius and two-step solvable. These two-step solvable algebras can exhibit remarkable complexity (see \textbf{\cite{LV}}), but when they are Frobenius (index zero) they are determined up to isomorphism by their dimension (see Theorem~\ref{thm:iso} -- Classification Theorem).  Moreover, we find that all such Frobenius, height-one Lie poset algebras are absolutely rigid (see Theorem~\ref{thm:h2} - Rigidity Theorem).  The Classification and Rigidity theorems constitute the main results of this article.  The latter result may be regarded as an extension, to the other classical types, of a recent result of \textbf{\cite{pA}} which establishes that a type-A Frobenius, Lie poset algebra of derived length less than three is absolutely rigid.

It is important to note that the rigidity theorem of \textbf{\cite{pA}} relies heavily on a recent cohomological result of Coll and Gerstenhaber \textbf{\cite{CG}}, where they establish, in particular, that if $\mathfrak{g}$ is a type-A Lie poset algebra then the space of infinitesimal deformations $H^2(\mathfrak{g}, \mathfrak{g})$, arising from the Chevalley-Eilenberg cochain complex, is a direct sum of three components,

\begin{eqnarray}\label{thm:CoG}
H^2(\mathfrak{g}, \mathfrak{g})= \left( \bigwedge ^{~~~2}\mathfrak{h}^{*} \bigotimes \mathfrak{c} \right) \bigoplus
\left(\mathfrak{h}^{*} \bigotimes H^1(\Sigma, \rm{k})               \right) \bigoplus H^2(\Sigma, \rm{k} ).
\end{eqnarray}

\noindent
Here, $\mathfrak{h}^{*}$ is the linear dual of a Cartan subalgebra of $\mathfrak{g}$, $\mathfrak{c}$ is the center of $\mathfrak{g}$, and $\displaystyle{\Sigma}$ is the nerve of 
$\mathcal{P}$, now viewed as a category.
Accordingly, the cohomology groups on the right side of (\ref{thm:CoG}) are simplicial in nature.

Unfortunately, the spectral sequence argument used to establish (\ref{thm:CoG}) does not extend to the type-B, C, and D cases as the nerves of the generating posets do not provide an appropriate simplicial complex (see Example~\ref{ex:typeBCD}). Broadening the context -- but restricting the focus -- we offer an alternative to (\ref{thm:CoG}). In particular, we establish, via direct calculation, that the second cohomology group of a Frobenius, two-step solvable Lie poset algebra with coefficients in its adjoint representation is trivial (see Theorem~\ref{thm:h2}).

The structure of the paper is as follows. In Section 2 we set the notation and review some background definitions. In Sections 3 and 4, we establish the Classification and Rigidity theorems, respectively.  In the Epilogue we discuss motivations and pose several questions for further study.

\section{Preliminaries}
We begin by formally stating the definition of Lie poset algebras.
\begin{definition}
Given a semisimple Lie algebra $\mathfrak{g}$, let $\mathfrak{b}\subset\mathfrak{g}$ denote a Borel subalgebra and $\mathfrak{h}$ its associated Cartan subalgebra. A Lie subalgebra $\mathfrak{p}\subset\mathfrak{g}$ satisfying $\mathfrak{h}\subset\mathfrak{p}\subset\mathfrak{b}$ is called a Lie poset algebra.
\end{definition}
\noindent




The next definition is fundamental to our study.
\begin{definition}

The \textit{index} of a Lie algebra $\mathfrak{g}$ is defined as 
\[\ind \mathfrak{g}=\min_{F\in \mathfrak{g^*}} \dim  (\ker (B_F)),\]

\noindent where $B_F$ is the skew-symmetric \textit{Kirillov form} defined by $B_F(x,y)=F([x,y])$, for all $x,y\in\mathfrak{g}$. The Lie algebra $\mathfrak{g}$ is \textit{Frobenius} if it has index zero.  An index-realizing functional $F$ is one for which the natural map $\mathfrak{g} \rightarrow \mathfrak{g}^*$ is an isomorphism. In such a case $F$ is called a Frobenius functional.

\begin{remark}
Frobenius Lie algebras are of interest to those studying deformation and quantum group theory stemming from their relation to the classical Yang-Baxter equation   
\textup(see \textup{\textbf{\cite{G1,G2}}}\textup).
\end{remark}

\end{definition}

For our work here, we make use of the following, more computational, 
characterization of the index. Let $\mathfrak{g}$ be an arbitrary Lie algebra with basis $\{x_1,...,x_n\}$. The index of $\mathfrak{g}$ can be expressed using the \textit{commutator matrix}, $([x_i,x_j])_{1\le i,j\le n}$, over the quotient field $R(\mathfrak{g})$ of the symmetric algebra $Sym(\mathfrak{g})$ as follows (cf \textbf{\cite{D}}).

\begin{theorem}\label{thm:commat}
$\ind \mathfrak{g}= n-\rank_{R(\mathfrak{g})}([x_i,x_j])_{1\le i, j\le n}.$
\end{theorem}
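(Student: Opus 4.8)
The plan is to unwind both sides in terms of structure constants and reduce the asserted identity to the elementary fact that the generic rank of a matrix of linear forms equals the largest rank it attains under specialization at points of $\textbf{k}^n$.

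First I would fix the basis $\{x_1,\dots,x_n\}$ and write $[x_i,x_j]=\sum_{k=1}^n c_{ij}^k x_k$ with $c_{ij}^k\in\textbf{k}$. A functional $F\in\mathfrak{g}^*$ is recorded by the scalars $f_k:=F(x_k)$, and the matrix of the Kirillov form $B_F$ in the chosen basis is $(B_F(x_i,x_j))_{i,j}=\big(\sum_k c_{ij}^k f_k\big)_{i,j}$. Since $\dim(\ker B_F)=n-\rank\,(B_F(x_i,x_j))_{i,j}$, the definition of the index gives $\ind\mathfrak{g}=n-\max_{F\in\mathfrak{g}^*}\rank\,(B_F(x_i,x_j))_{i,j}$. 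Next observe that the commutator matrix $M:=([x_i,x_j])_{i,j}$ has entries $\sum_k c_{ij}^k x_k$, which are homogeneous linear polynomials in $Sym(\mathfrak{g})=\textbf{k}[x_1,\dots,x_n]$, and that the substitution $x_k\mapsto f_k$ turns $M$ into precisely the matrix $(B_F(x_i,x_j))_{i,j}$ above. So it remains to verify that $\rank_{R(\mathfrak{g})}M=\max_{(f_1,\dots,f_n)\in\textbf{k}^n}\rank\,M(f_1,\dots,f_n)$.

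For this last identity, set $r=\rank_{R(\mathfrak{g})}M$. On one hand, every $(r+1)\times(r+1)$ minor of $M$ is the zero polynomial, so every specialization of $M$ has rank at most $r$. On the other hand, some $r\times r$ minor $p(x_1,\dots,x_n)$ is a nonzero polynomial; since $\textbf{k}$ is algebraically closed, in particular infinite, $p$ does not vanish identically, so there is a point $(f_1,\dots,f_n)$ with $p(f_1,\dots,f_n)\neq 0$, whence $\rank\,M(f_1,\dots,f_n)\ge r$. Combining these observations, $\max_{(f_1,\dots,f_n)}\rank\,M(f_1,\dots,f_n)=r$, and therefore $\ind\mathfrak{g}=n-r=n-\rank_{R(\mathfrak{g})}([x_i,x_j])_{1\le i,j\le n}$, as claimed.

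The only step that is more than bookkeeping is the passage between the generic rank over $R(\mathfrak{g})$ and the maximal specialized rank, and the sole hypothesis it requires is that the ground field be infinite — guaranteed by our standing assumption that $\textbf{k}$ is algebraically closed of characteristic zero. Thus there is no genuine obstacle here; the substance of the theorem is simply the translation of the extrinsic minimization $\min_F\dim(\ker B_F)$ into the coordinate-free language of the generic rank of the commutator matrix over the rational function field $R(\mathfrak{g})$, which is the form most convenient for the computations to follow.
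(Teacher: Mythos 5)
Your proposal is correct and follows essentially the same route as the paper: identify $\dim(\ker B_F)$ with $n$ minus the rank of the specialized commutator matrix, then show the generic rank over $R(\mathfrak{g})$ equals the maximal rank over all specializations by exhibiting a nonvanishing point of a nonzero minor (the paper phrases this as extending $F$ to an algebra homomorphism $Sym(\mathfrak{g})\to\textbf{k}$, which is the same specialization argument). Your handling of the inequality $\max_F\rank M_F\le\rank_{R(\mathfrak{g})}M$ via vanishing of all $(r+1)\times(r+1)$ minors is a slightly cleaner packaging of the paper's converse direction, but the substance is identical.
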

\noindent
\textit{Proof.}\footnote{We are indebted to A. Ooms who provided the  proof of Theorem \ref{thm:commat} in a private communication.  While the Theorem is widely quoted (and is due to Dixmier), we could not find a detailed proof in the literature.}
Let $M=([x_i,x_j])_{1\le i,j\le n}$ and $M_f=(f([x_i,x_j]))_{1\le i,j\le n}$, for $f\in\mathfrak{g}^*$. Take any $f\in\mathfrak{g}^*$ and $y=\sum_{k=1}^na_kx_k\in \mathfrak{g}$. We have that $y\in\ker(B_f)$ if and only if $f([x_i,y])=0$, for all $i=1,\hdots,n$, if and only if $\sum_{k=1}^na_kf([x_i,x_k])=0$, for all $i=1,\hdots,n$. Thus, $y\in\ker(B_f)$ if and only if the sequence of values $a_1,\hdots,a_n$ define an element in the nullspace of $M_f$. Thus, $\dim(\ker(B_f))=\dim(\ker(M_f))=n-\rank(M_f)$. Therefore, showing $$\rank(M)=\max_{f\in\mathfrak{g}^*} \rank(M_f)$$ will establish the result. 

If $M$ is of rank $r$, then there exists an $r\times r$ submatrix $A$ of $M$ such that $\det(A)\neq 0$. Since $\det(A)$ is a polynomial in $\{x_1,\hdots,x_n\}$ and we can extend any $f\in\mathfrak{g}^*$ to an algebra homomorphism into $\mathbf{k}$, there exists $f_A\in\mathfrak{g}^*$ such that $\det(f_A(A))=f_A(\det(A))\neq 0$; that is, $M_{f_A}$ has an $r\times r$ submatrix with non-zero determinant. Thus, $$\rank(M)\le \max_{f\in\mathfrak{g}^*} \rank(M_f).$$

Conversely, assume $$\rank(M_{f'})=r=\max_{f\in\mathfrak{g}^*} \rank(M_f),$$ for $f'\in\mathfrak{g}^*$. Further, let $A$ be an $r\times r$ submatrix of $M_{f'}$ with non-zero determinant, and which corresponds to the submatrix $A_M$ of $M$. Then $f'(\det(A_M))=\det(f'(A_M))=\det(A)\neq 0$; that is, $\det(A_M)\neq 0$ and $$\rank(M)\ge \max_{f\in\mathfrak{g}^*}\rank(M_f).$$ The claim follows. \hfill$\qed$
\\*

Since Lie poset algebras are evidently solvable, we lastly recall the following basic definition, since we require certain associated terminology for the Classification theorem.

\begin{definition}\label{def:derlen}
Let $\mathfrak{g}$ be a finite-dimensional Lie algebra.  The derived series $\mathfrak{g}=\mathfrak{g}^0\supset\mathfrak{g}^1\supset\mathfrak{g}^2\supset\hdots$ is defined by $\mathfrak{g}^i=[\mathfrak{g}^{i-1},\mathfrak{g}^{i-1}]$, for $i>0$. The least $k$ for which $\mathfrak{g}^k\neq 0$ and $\mathfrak{g}^j=0$, for $j>k$, is called the derived length of $\mathfrak{g}$. The Lie algebra $\mathfrak{g}$ is solvable if its derived series terminates in the the zero subalgebra. If a Lie algebra has finite derived length $k$, then it is $(k+1)$-step solvable.
\end{definition}

\section{Classification}\label{sec:iso}

In this section, we show that any two Frobenius, two-step solvable Lie poset algebras of the same dimension are isomorphic.


We require the following lemma to show that a Lie poset algebra inherits a Cartan-Weyl basis from its parent semi-simple Lie algebra. Recall that a Cartan-Weyl basis of a Lie algebra, $\mathfrak{g},$ is defined to be a basis consisting of generators of the Cartan subalgebra, $\mathfrak{h}\subset\mathfrak{g},$ along with root vectors, $E_{\alpha},$ where $\alpha\in\Psi_{\mathfrak{g}}$ -- the root system of $\mathfrak{g}.$ If $\mathfrak{g}$ is given in a Cartan-Weyl basis, the multiplication of basis vectors takes the form $[h_i,h_j]=0$ for all $i,j\in\{1,\dots,\dim \mathfrak{h}\},$ and $[h_i,E_{\alpha}]=\alpha(h_i)E_{\alpha}.$ In fact, the lemma goes further to guarantee that any subalgebra containing the Cartan of the parent semisimple Lie algebra has such a basis.

\begin{lemma}\label{lem:cartan}
Let $\mathfrak{g}$ be an $n+m$-dimensional semi-simple Lie algebra with Cartan subalgebra $\mathfrak{h}$ and Cartan-Weyl basis $\mathscr{B}(\mathfrak{g})$. All Lie subalgebras of $\mathfrak{g}$ containing $\mathfrak{h}$ have a Cartan-Weyl basis. In particular, if $\mathfrak{g}'$ is a Lie subalgebra of $\mathfrak{g}$ and $\mathfrak{h}\subset \mathfrak{g}'$, then there exists a basis, $\mathscr{B}(\mathfrak{g}')$, of $\mathfrak{g}'$ such that $\mathscr{B}(\mathfrak{g}')\subset\mathscr{B}(\mathfrak{g})$.
\end{lemma}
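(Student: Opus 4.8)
The plan is to show that the Cartan-Weyl basis $\mathscr{B}(\mathfrak{g})$ of the parent semisimple Lie algebra, when intersected with a subalgebra $\mathfrak{g}'$ containing $\mathfrak{h}$, already furnishes a basis of $\mathfrak{g}'$. Write $\mathscr{B}(\mathfrak{g}) = \{h_1,\dots,h_{\dim\mathfrak{h}}\} \cup \{E_\alpha : \alpha \in \Psi_{\mathfrak{g}}\}$, so that $\mathfrak{g} = \mathfrak{h} \oplus \bigoplus_{\alpha \in \Psi_{\mathfrak{g}}} \mathbf{k}E_\alpha$ as a vector space, and this decomposition is precisely the root space decomposition with respect to the adjoint action of $\mathfrak{h}$. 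The key structural input is that $\mathfrak{g}'$ is an $\mathfrak{h}$-submodule of $\mathfrak{g}$: since $\mathfrak{h}\subset\mathfrak{g}'$ and $\mathfrak{g}'$ is a subalgebra, we have $[\mathfrak{h},\mathfrak{g}']\subseteq\mathfrak{g}'$.

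First I would invoke the standard fact that a submodule of a semisimple (completely reducible) $\mathfrak{h}$-module is the direct sum of its intersections with the isotypic/weight components — equivalently, since $\mathfrak{h}$ acts diagonalizably on $\mathfrak{g}$ with the $h_i$ acting by distinct characters on the distinct root spaces (and the root spaces are one-dimensional, the zero weight space being exactly $\mathfrak{h}$), any $\mathfrak{h}$-stable subspace $V$ satisfies $V = (V\cap\mathfrak{h}) \oplus \bigoplus_{\alpha\in\Psi_{\mathfrak{g}}}(V\cap\mathbf{k}E_\alpha)$. Concretely: given $x\in\mathfrak{g}'$, expand $x = h + \sum_{\alpha} c_\alpha E_\alpha$; applying $\mathrm{ad}(h_i)$ repeatedly and using a Vandermonde/separating argument on the characters $\alpha(h_i)$ shows each component $c_\alpha E_\alpha$ lies in $\mathfrak{g}'$, and hence $h\in\mathfrak{g}'$ too. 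Applying this with $V=\mathfrak{g}'$, and using $\mathfrak{h}\subseteq\mathfrak{g}'$ so that $\mathfrak{g}'\cap\mathfrak{h}=\mathfrak{h}$, we get
\[
\mathfrak{g}' = \mathfrak{h} \oplus \bigoplus_{\alpha\in\Psi_{\mathfrak{g}}} (\mathfrak{g}'\cap\mathbf{k}E_\alpha),
\]
and since each $\mathbf{k}E_\alpha$ is one-dimensional, $\mathfrak{g}'\cap\mathbf{k}E_\alpha$ is either $0$ or $\mathbf{k}E_\alpha$. Letting $\Psi_{\mathfrak{g}'} = \{\alpha\in\Psi_{\mathfrak{g}} : E_\alpha\in\mathfrak{g}'\}$, the set $\mathscr{B}(\mathfrak{g}') := \{h_1,\dots,h_{\dim\mathfrak{h}}\} \cup \{E_\alpha : \alpha\in\Psi_{\mathfrak{g}'}\}$ is then a basis of $\mathfrak{g}'$ contained in $\mathscr{B}(\mathfrak{g})$.

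Finally I would check that this basis deserves to be called a Cartan-Weyl basis in the sense defined just before the lemma: the bracket relations $[h_i,h_j]=0$ and $[h_i,E_\alpha]=\alpha(h_i)E_\alpha$ hold in $\mathfrak{g}'$ simply because they hold in $\mathfrak{g}$ and the elements involved lie in $\mathscr{B}(\mathfrak{g}')$; one should note that $\mathfrak{h}$ is a (self-centralizing) Cartan subalgebra of $\mathfrak{g}'$ as well — it is abelian, and any element of $\mathfrak{g}'$ commuting with all of $\mathfrak{h}$ has no nonzero root-vector components by the relation $[h_i,E_\alpha]=\alpha(h_i)E_\alpha$ with $\alpha\neq 0$, hence lies in $\mathfrak{h}$ — and the $\{E_\alpha\}_{\alpha\in\Psi_{\mathfrak{g}'}}$ are genuine root vectors for this Cartan. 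The only mild subtlety, and the place that warrants a careful sentence rather than a routine one, is the separation argument establishing that an $\mathfrak{h}$-stable subspace decomposes along root spaces — i.e., that the characters by which $\mathfrak{h}$ acts on distinct nonzero root spaces are distinct and distinct from the trivial one — which is exactly the content of $\mathfrak{g}$ having a root space decomposition; everything else is bookkeeping.
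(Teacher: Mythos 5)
Your proposal is correct and rests on exactly the same mechanism as the paper's proof: since $\mathfrak{h}\subset\mathfrak{g}'$, the subalgebra $\mathfrak{g}'$ is $\mathrm{ad}(\mathfrak{h})$-stable, and the distinctness of the characters on the (one-dimensional) root spaces forces each root component of an element of $\mathfrak{g}'$ to lie in $\mathfrak{g}'$. The only difference is packaging — the paper runs a minimal-counterexample descent (a suitable linear combination of $a$ and $[h,a]$ kills one summand), whereas you invoke the standard weight-space decomposition of an $\mathfrak{h}$-stable subspace via a Vandermonde/separating-element argument — and your added verification that the resulting basis satisfies the Cartan--Weyl bracket relations is a harmless extra.
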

\begin{proof}
Let $\mathscr{B}(\mathfrak{g})=\{h_1,\dots,h_n,w_1,\dots,w_m\}$, where $\{h_1,\hdots,h_n\}$ forms a basis of $\mathfrak{h}$. Since $\mathfrak{h}\subset\mathfrak{g}'$, it suffices to show that if $a=\sum_{i=1}^r a_iw_i\in\mathfrak{g}'$, then at least one of the $w_i$ is contained in $\mathfrak{g}'$; this will imply that all such $w_i$ are contained in $\mathfrak{g}'$ and we can extend $\{h_1,\hdots,h_n\}$ to a basis $\mathscr{B}(\mathfrak{g}')\subset \mathscr{B}(\mathfrak{g})$. Assume that the given $a\in\mathfrak{g}'$ has minimal $r$ such that no summand $w_i$ is contained in $\mathfrak{g}'$; surely $r\ge 2$. If $[h,a]=c_ha$, for all $h\in\mathfrak{h}$, then $a$ and its summands $w_i$ are elements of the same root space of $\mathfrak{g}$; that is, since root spaces of $\mathfrak{g}$ are one-dimensional, $a=cw_i$ for some summand $w_i$ of $a$, a contradiction. Thus, there must exist $h\in\mathfrak{h}$ such that $[h,a]$ is not a multiple of $a$ but is a linear combination of the same summands. This implies that there is a linear combination of $a$ and $[h,a]$ which is not zero and contains no more than $r-1$ of these summands. One of them is consequently already in $\mathfrak{g}'$, a contradiction.
\end{proof}

We are now in a position to address the first of our main results.

\begin{theorem}[Classification Theorem]\label{thm:iso}
Frobenius, two-step solvable Lie poset algebras are isomorphic if and only if the algebras have the same dimension.
\end{theorem}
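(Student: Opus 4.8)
The forward implication is trivial, since isomorphic Lie algebras have the same dimension. For the converse, the plan is to show that \emph{every} Frobenius, two-step solvable Lie poset algebra of dimension $N$ is isomorphic to a single algebra depending only on $N$: namely, that $N$ is necessarily even and $\mathfrak{p}\cong\mathfrak{a}^{\oplus N/2}$, where $\mathfrak{a}=\langle h,v\mid [h,v]=v\rangle$ is the non-abelian two-dimensional Lie algebra. Any two Frobenius, two-step solvable Lie poset algebras of equal dimension are then isomorphic to the same model, hence to each other.

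The first step is to put $\mathfrak{p}$ into Cartan--Weyl normal form. Suppose $\mathfrak{h}\subseteq\mathfrak{p}\subseteq\mathfrak{b}\subseteq\mathfrak{g}$ with $\mathfrak{g}$ semisimple. By Lemma~\ref{lem:cartan}, $\mathfrak{p}$ has a basis $\{h_1,\dots,h_n\}\cup\{E_\alpha:\alpha\in\Phi\}$ with $\{h_i\}$ a basis of $\mathfrak{h}$ ($n=\dim\mathfrak{h}$), $\Phi$ a set of positive roots of $\mathfrak{g}$ (positive since $\mathfrak{p}\subseteq\mathfrak{b}$), and $[h_i,E_\alpha]=\alpha(h_i)E_\alpha$. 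Since every $\alpha\in\Phi$ is nonzero and each $[E_\alpha,E_\beta]$ is a scalar multiple of $E_{\alpha+\beta}$ with $\alpha+\beta\ne 0$, a short computation gives $[\mathfrak{p},\mathfrak{p}]=\operatorname{span}\{E_\alpha:\alpha\in\Phi\}=:V$; set $m=|\Phi|$, so $\dim\mathfrak{p}=n+m$ and $\mathfrak{h}\cap V=0$. Two-step solvability says $[[\mathfrak{p},\mathfrak{p}],[\mathfrak{p},\mathfrak{p}]]=0$, i.e. $V$ is an abelian ideal; with $\mathfrak{h}$ as a complementary subalgebra this exhibits $\mathfrak{p}=\mathfrak{h}\ltimes V$, where $\mathfrak{h}$ acts on $V$ diagonally through the weights $\{\alpha:\alpha\in\Phi\}\subseteq\mathfrak{h}^*$.

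The second step computes the index via Theorem~\ref{thm:commat}. In the basis above the commutator matrix has the block form $\left(\begin{smallmatrix}0&B\\-B^{T}&0\end{smallmatrix}\right)$, where the $n\times m$ block $B$ has $(i,\alpha)$-entry $\alpha(h_i)\,E_\alpha$; thus its rank over $R(\mathfrak{p})$ is $2\rank_{R(\mathfrak{p})}B$. Each variable $E_\alpha$ is a unit in $R(\mathfrak{p})$, so dividing the $\alpha$-th column of $B$ by $E_\alpha$ shows $\rank_{R(\mathfrak{p})}B=\rank_{\mathbf{k}}\big(\alpha(h_i)\big)_{i,\alpha}=\dim_{\mathbf{k}}\operatorname{span}\{\alpha:\alpha\in\Phi\}=:d$. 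Hence $\ind\mathfrak{p}=(n+m)-2d$. As $d\le m$ and $d\le n$, the Frobenius condition $n+m=2d$ forces $m=n$ and $d=n$; that is, $\Phi=\{\alpha_1,\dots,\alpha_n\}$ consists of exactly $n$ roots forming a basis of $\mathfrak{h}^*$, and $\dim\mathfrak{p}=2n$. Replacing $\{h_i\}$ by the basis $\{h_1',\dots,h_n'\}$ of $\mathfrak{h}$ dual to $\{\alpha_1,\dots,\alpha_n\}$ and setting $v_j=E_{\alpha_j}$, every bracket becomes $[h_i',v_j]=\delta_{ij}v_j$ and $[h_i',h_j']=[v_i,v_j]=0$, so $\mathfrak{p}=\bigoplus_{i=1}^{n}\langle h_i',v_i\rangle\cong\mathfrak{a}^{\oplus n}$, which depends only on $n=\tfrac12\dim\mathfrak{p}$. (Conversely each $\mathfrak{a}^{\oplus n}$ is realized, being the Borel-above-Cartan subalgebra of $\mathfrak{sl}_2^{\oplus n}$.) This proves the converse.

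There is no deep obstacle; the content is concentrated in the first step — extracting, from Lemma~\ref{lem:cartan} together with two-step solvability, the semidirect decomposition $\mathfrak{p}=\mathfrak{h}\ltimes V$ with $V$ abelian — and in the bookkeeping behind the index computation. The single point requiring care is that a priori the weight set $\{\alpha:\alpha\in\Phi\}$ is an arbitrary configuration in $\mathfrak{h}^*$; the entire force of the Frobenius hypothesis is that it must in fact be a \emph{basis}, after which the decomposition into copies of $\mathfrak{a}$ is automatic.
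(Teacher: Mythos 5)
Your proposal is correct and follows essentially the same route as the paper: invoke Lemma~\ref{lem:cartan} to get a Cartan--Weyl basis, observe that two-step solvability forces the commutator matrix into the block form $\left(\begin{smallmatrix}0&B\\-B^{T}&0\end{smallmatrix}\right)$, and use the Frobenius condition via Theorem~\ref{thm:commat} to force $B$ to be square of full rank, whence a change of basis in $\mathfrak{h}$ yields the normal form $\Phi_n\cong\mathfrak{a}^{\oplus n}$. Your write-up is somewhat more explicit than the paper's at the two terse points (why $\rank_{R(\mathfrak{p})}B$ equals the rank of the scalar weight matrix, and the passage to the dual basis giving $[h_i',v_j]=\delta_{ij}v_j$), but the argument is the same.
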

\begin{proof}
Let $\mathfrak{g}$ be a two-step, solvable Lie poset algebra and let  
$\mathscr{B}(\mathfrak{g})$ be the Cartan-Weyl basis of $\mathfrak{g}$
guaranteed to exist by Lemma~\ref{lem:cartan}.  Let $D(\mathfrak{g})\subset \mathscr{B}(\mathfrak{g})$ consist of the basis elements of $\mathfrak{h}\subset\mathfrak{g}$ and $T(\mathfrak{g})=\mathscr{B}(\mathfrak{g})\setminus D(\mathfrak{g})$. Define the commutator matrix $C(\mathfrak{g})=([x_i,x_j])_{1\le i, j\le n+m}$, where $$\mathscr{B}(\mathfrak{g})=\{x_1,\hdots,x_{n+m}\}$$ with the elements of $D(\mathfrak{g})$ occurring first, followed by the elements of $T(\mathfrak{g})=\mathscr{B}(\mathfrak{g})\backslash D(\mathfrak{g})$. Note that, with the given ordering, $C(\mathfrak{g})$ has the form illustrated in Figure~\ref{fig:h01m}, where $B(\mathfrak{g})$ has rows labeled by elements of $D(\mathfrak{g})$ and columns labeled by elements of $T(\mathfrak{g})$, and $-B(\mathcal{P})^T$ has these labels reversed.
\begin{figure}[H]
$$\begin{tikzpicture}
  \matrix [matrix of math nodes,left delimiter={(},right delimiter={)}]
  {
    0  & B(\mathfrak{g})   \\       
    -B(\mathfrak{g})^T  & 0   \\       };
\end{tikzpicture}$$
\caption{Matrix form of $C(\mathfrak{g})$}\label{fig:h01m}
\end{figure}
\noindent
To see this, since $[\mathfrak{h},\mathfrak{h}]=0$, it suffices to show that $[x,y]=0$, for all $x,y\in T(\mathfrak{g})$. Assume for a contradiction that there exists $z\in \mathfrak{g}$ such that $[x,y]=z\neq 0$, for some $x,y\in T(\mathfrak{g})$. Since $x,y\notin D(\mathfrak{g})$, there exist $h_x,h_y\in \mathfrak{h}$ such that $[h_x,x]=k_1x$ and $[h_y,y]=k_2y$, for $k_1,k_2\neq 0$. Thus, $x,y\in\mathfrak{g}^1$ and $z\in\mathfrak{g}^2$, a contradiction.

Considering Theorem~\ref{thm:commat}, since $\mathfrak{g}$ is Frobenius, $C(\mathfrak{g})$ is equivalent to a diagonal matrix. In particular, $B(\mathfrak{g})$ is equivalent to a diagonal matrix. Now, since the rows of $B(\mathfrak{g})$ are labeled by elements of $D(\mathfrak{g})\subset \mathfrak{h},$ the entries in the column labeled by $x\in T(\mathfrak{g})$ are all multiples of $x$. Thus, there exists a basis $\{d_1,e_1,\hdots,d_n,e_n\}$  for $\mathfrak{g}$ where $\{e_1,\hdots,e_n\}=T(\mathfrak{g})$, the $d_i$ are linear combinations of elements in $D(\mathfrak{g})$, and $[d_i,e_i]=e_i$, for $i=1,\hdots,n$; we denote this Lie algebra by $\Phi_n$. Therefore, every Frobenius, two-step solvable Lie poset algebra of dimension $2n$ is isomorphic to $\Phi_n$. The result follows.
\end{proof}

\begin{remark}
In \textup{\textbf{\cite{pA}}}, it is shown that height-one posets whose Hasse diagrams are connected, acyclic, bipartite graphs generate Frobenius, type-A Lie poset algebras. Since any such connected, acyclic, bipartite graph can be paired with a tree in such a way that non-isomorphic trees get paired with non-isomorphic posets, we find that there are at least $\lfloor\frac{n^{n-2}}{n!}\rfloor$ Frobenius, two-step solvable, $n$-dimensional type-A Lie poset algebras corresponding to non-isomorphic posets. Thus, Theorem~\ref{thm:iso} is not trivial.
\end{remark}

\section{Rigidity}
In this section, we show $\Phi_n$ is absolutely rigid; that is, has no infinitesimal deformations.  By Theorem~\ref{thm:iso}, it will follow that all Frobenius, two-step solvable Lie poset algebras cannot be deformed.

To set the context for our rigidity result, we recall some basic facts from the (infinitesimal) deformation theory of Lie algebras.
Recall the standard Chevalley-Eilenberg cochain complex $(C^{\bullet}(\mathfrak{g},\mathfrak{g}),\mathfrak{g})$ of $\mathfrak{g}$ with coefficients in the $\mathfrak{g}$-module $\mathfrak{g}$; that is, $C^n(\mathfrak{g},\mathfrak{g})$ consists of forms $F^n:\bigwedge_{i=1}^n\mathfrak{g}\to \mathfrak{g}$ satisfying $\delta^2 F^n=0$, where 
\begin{align}
\delta F^n(g_1,\hdots,g_{n+1})=\sum_{i=1}^{n+1}(-1)^{i+1}[&g_i, F^n(g_1,\hdots,\hat{g}_i,\hdots,g_{n+1})]\nonumber \\ 
&+\sum_{1\le i<j\le n+1}(-1)^{i+j}F^n([x_i,x_j],x_1,\hdots,\hat{x}_i,\hdots,\hat{x}_j,\hdots,x_{n+1}).\nonumber
\end{align}
In this setting, $Z^n(\mathfrak{g},\mathfrak{g})=\ker(\delta)\cap C^n(\mathfrak{g},\mathfrak{g})$, $B^n(\mathfrak{g},\mathfrak{g})=Im(\delta)\cap C^n(\mathfrak{g},\mathfrak{g})$, and $H^n(\mathfrak{g},\mathfrak{g})=Z^n(\mathfrak{g},\mathfrak{g})/ B^n(\mathfrak{g},\mathfrak{g})$.  These comprise, respectively, the \textit{n-cocycles}, \textit{n-coboundaries}, and $n^{th}$ \textit{cohomology group} of $\mathfrak{g}$ with coefficients in $\mathfrak{g}$. Up to equivalence, the infinitesimal deformations of $\mathfrak{g}$ may be regarded as elements of 
$H^2 ( \mathfrak{g}, \mathfrak{g})$ with the obstructions to their propagation to higher-order deformations lying in $H^3 ( \mathfrak{g}, \mathfrak{g})$ (see \textbf{\cite{F}} and \textbf{\cite{Fox}}).  If each element of $H^2 ( \mathfrak{g}, \mathfrak{g})$ is obstructed, then $\mathfrak{g}$
is called \textit{rigid}, and if $H^2 ( \mathfrak{g}, \mathfrak{g})=0$, then 
$\mathfrak{g}$ is said to be  \textit{absolutely rigid}.

\begin{theorem}\label{thm:h2}
$H^2(\Phi_n,\Phi_n)=0$.
\end{theorem}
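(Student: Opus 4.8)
The plan is to compute $H^2(\Phi_n,\Phi_n)$ directly from the Chevalley--Eilenberg complex, exploiting the very simple structure of $\Phi_n$: it has a basis $\{d_1,e_1,\dots,d_n,e_n\}$ with the only nonzero brackets being $[d_i,e_i]=e_i$. First I would record the weight decomposition of $\Phi_n$ under the adjoint action of the abelian subalgebra $\mathfrak h=\mathrm{span}\{d_1,\dots,d_n\}$: each $d_i$ has weight $0$ and each $e_i$ has weight $\epsilon_i$ (the $i$th coordinate functional). Since $\mathfrak h$ acts semisimply, the whole cochain complex $C^\bullet(\Phi_n,\Phi_n)$ splits as a direct sum of weight spaces, and $\delta$ preserves weights; moreover the cohomology of the full complex agrees with the cohomology of the weight-zero subcomplex (the standard averaging/Cartan-type argument, since a coboundary can be extracted weight by weight). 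So the real task is to analyze $H^2$ in the weight-zero part, and then separately check that the nonzero-weight contributions vanish.

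The key steps, in order: (1) set up the dual basis $\{d_i^*,e_i^*\}$ and write a general $2$-cochain as $F=\sum (\text{wedge of two basis covectors})\otimes(\text{basis vector})$, tracking weights. (2) Identify which wedges $d_i^*\wedge d_j^*$, $d_i^*\wedge e_j^*$, $e_i^*\wedge e_j^*$ paired with which target vector $d_k$ or $e_k$ give weight zero; this is a short bookkeeping exercise — e.g. $e_i^*\wedge e_j^*\otimes d_k$ has weight $-\epsilon_i-\epsilon_j$, never zero, so such terms must be handled in the nonzero-weight part, while $d_i^*\wedge d_j^*\otimes d_k$ and $d_i^*\wedge e_j^*\otimes e_j$ and $e_i^*\wedge d_i^*\otimes\cdots$ etc. contribute to weight zero. (3) Write down $\delta^1$ on $1$-cochains and $\delta^2$ on $2$-cochains explicitly using only $[d_i,e_i]=e_i$; because the structure constants are so sparse, these formulas are short. (4) Compute $Z^2$ (the cocycle conditions coming from the Jacobi-type terms on triples of basis vectors) and $B^2=\delta^1(C^1)$, and show the quotient is zero, weight-block by weight-block. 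A useful reduction is that $\Phi_n\cong \bigoplus_{i=1}^n \mathfrak{aff}(1)$ as a direct sum of ideals (each $\mathrm{span}\{d_i,e_i\}$ is an ideal isomorphic to the two-dimensional non-abelian Lie algebra $\mathfrak{aff}(1)$), so one can invoke a Künneth-type decomposition for Lie algebra cohomology with adjoint coefficients, writing $H^2(\Phi_n,\Phi_n)$ in terms of $H^p(\mathfrak{aff}(1),\mathfrak{aff}(1))$ and $H^q$ of the other summands acting trivially; since $\mathfrak{aff}(1)$ is known to be absolutely rigid ($H^2(\mathfrak{aff}(1),\mathfrak{aff}(1))=0$) and one must also check the cross terms $H^1(\mathfrak{aff}(1),\mathfrak{aff}(1))\otimes H^1(\text{rest},\text{rest as module})$ and the $H^p(\mathfrak{g}_i,\mathbf k)\otimes H^q(\mathfrak{g}_j,\mathfrak g_j)$ pieces vanish appropriately, this route is clean if one is careful about the module structures.

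I expect the main obstacle to be the cross terms. The direct-sum-of-ideals structure makes $H^2$ look potentially nonzero through $H^1(\mathfrak{g}_i,-)\otimes H^1(\mathfrak{g}_j,-)$ contributions, and $\mathfrak{aff}(1)$ does have nontrivial low-degree cohomology in some coefficient modules (e.g. $H^1(\mathfrak{aff}(1),\mathbf k)$ is one-dimensional, and $H^1(\mathfrak{aff}(1),\mathfrak{aff}(1))=0$ but this needs checking, as does $H^0$). So the delicate part is verifying that every bilinear cross contribution either is itself zero or is killed because the relevant adjoint module of $\mathfrak{g}_j$ restricted to the (trivial) action of $\mathfrak{g}_i$ has vanishing cohomology in the needed degree. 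Because of this, it may actually be safer — and is likely what the authors do — to bypass Künneth and just grind the direct computation in step (4): with the bracket being this sparse, $Z^2$ and $B^2$ are small explicit spaces and one can verify $Z^2=B^2$ by inspection of a handful of cochain types, handling the weight-zero block and the weight-$\pm\epsilon_i$, weight-$(\epsilon_i-\epsilon_j)$, weight-$(\epsilon_i+\epsilon_j)$ blocks in turn. Either way the conclusion is $H^2(\Phi_n,\Phi_n)=0$, and combined with Theorem~\ref{thm:iso} this yields absolute rigidity for all Frobenius, two-step solvable Lie poset algebras.
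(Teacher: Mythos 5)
Your proposal is correct, and its core idea is genuinely different from the paper's. The paper simply grinds the direct computation you describe in step (4): it writes a general $2$-cochain, solves $\delta F^2=0$ to pin down the shape of an arbitrary cocycle, and then exhibits an explicit $1$-cochain $F^1$ with $\delta F^1=F^2$ by matching coefficients; there is no weight decomposition and no K\"unneth argument. Your alternative route via the ideal decomposition $\Phi_n\cong\bigoplus_{i=1}^n\mathfrak{g}_i$ with $\mathfrak{g}_i=\mathrm{span}\{d_i,e_i\}\cong\mathfrak{aff}(1)$ does go through, and the cross terms you flag as the delicate point all vanish for concrete reasons worth recording: since $[\mathfrak{g}_i,\mathfrak{g}_j]=0$ for $i\neq j$, the adjoint module splits as $\bigoplus_j\mathfrak{g}_j$ with $\mathfrak{g}_i$ acting trivially on $\mathfrak{g}_j$ for $i\neq j$, so
$H^2(\Phi_n,\mathfrak{g}_j)=\bigoplus_{\sum p_i=2}\bigl(\bigotimes_{i\neq j}H^{p_i}(\mathfrak{g}_i,\mathbf{k})\bigr)\otimes H^{p_j}(\mathfrak{g}_j,\mathfrak{g}_j)$,
and every summand dies because $H^0(\mathfrak{aff}(1),\mathfrak{aff}(1))=0$ (trivial center, which kills the $H^1(\mathfrak{g}_i,\mathbf{k})\otimes H^1(\mathfrak{g}_{i'},\mathbf{k})\otimes H^0(\mathfrak{g}_j,\mathfrak{g}_j)$ terms you were worried about), $H^1(\mathfrak{aff}(1),\mathfrak{aff}(1))=0$ (all derivations are inner: a two-line check), $H^2(\mathfrak{aff}(1),\mathfrak{aff}(1))=0$, and $H^2(\mathfrak{aff}(1),\mathbf{k})=0$. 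What each approach buys: the paper's direct computation is self-contained and produces the explicit primitive $F^1$, but it is pure bookkeeping and gives little structural insight; your decomposition explains \emph{why} the cohomology vanishes, immediately yields $H^m(\Phi_n,\Phi_n)=0$ for all $m\geq 1$ (not just $m=2$, which also disposes of the obstruction space $H^3$), and scales with no extra effort. The one thing to be careful about if you write it up is to state the K\"unneth isomorphism for a direct sum of ideals with the correct module structures (adjoint on one factor, trivial on the rest), exactly as you indicated; the weight-space reduction in your steps (1)--(2) is then not even needed.
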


\begin{proof}
Let $F^2\in C^2(\Phi_n,\Phi_n)$ be defined by $F^2(x,y)=\sum_{k=1}^n(f^{x,y}_{d_k}d_k+f^{x,y}_{e_k}e_k)$, for $x,y\in\{d_1,e_1,\hdots,d_n,e_n\}$ such that $x\neq y$. Solving $\delta F^2=0$, we find that if $F^2\in Z^2(\Phi_n,\Phi_n)$ and $i\neq j,$ then
\begin{itemize}
    \item $F^2(e_i,e_j)=f_{e_i}^{e_i,e_j}e_i+f_{e_j}^{e_i,e_j}e_j$;
    \item $F^2(d_i,d_j)=f_{e_i}^{d_i,d_j}e_i+f_{e_j}^{d_i,d_j}e_j$;
    \item $F^2(d_i,e_j)=f_{e_i}^{d_i,e_j}e_i+f_{e_j}^{d_i,e_j}e_j$; and
    \item $F^2(d_i,e_i)=f_{e_i}^{d_i,e_i}e_i+f_{d_i}^{d_i,e_i}d_i-\sum_{k\neq i}(f_{e_k}^{d_k,e_i}e_k+f_{e_k}^{e_i,e_k}d_k).$
\end{itemize}
Let $F^1\in C^1(\Phi_n,\Phi_n)$ be defined by $F^1(x)=\sum_{k=1}^n(f^{x}_{d_k}d_k+f^{x}_{e_k}e_k)$, for $x\in\{d_1,e_1,\hdots,d_n,e_n\}$. Setting
\begin{itemize}
    \item $f_{d_j}^{e_i}=f_{e_j}^{e_i,e_j}$,
    \item $f_{e_i}^{d_j}=f_{e_i}^{d_i,d_j}$,
    \item $f_{e_i}^{e_j}=f_{e_i}^{d_i,e_j}$,
    \item $f_{d_i}^{e_i}=-f_{d_i}^{d_i,e_i}$,
    \item $f_{d_j}^{d_i}=f^{d_i,e_j}_{e_j}$, for all $i,j$,
    \item and all remaining terms equal to 0,
\end{itemize}
we get $\delta F^1=F^2$. The result follows.  
\end{proof}

\section{Epilogue}
The original motivation for this article was to understand if the type-A cohomological result of equation (\ref{thm:CoG}) could be succinctly extended to the other classical types -- this owing to the recent introduction of the definitions of Lie poset algebras in types B, C, and D (see \textbf{\cite{pBCD}} and Definition \ref{def:bcd} below).

It is evident from (\ref{thm:CoG}) that type-A Lie poset algebras are simplicial in nature -- the chains of the poset which generates a type-A Lie poset algebra conveniently furnishing the simplicial object $\Sigma$. In the other classical cases, the analogously defined $\Sigma$ fails to satisfy (\ref{thm:CoG}). To see this, we introduce the definitions of the posets of type C, D, and B. These define generating posets for Lie posets of type C, D, and B, respectively.

\begin{definition}\label{def:bcd}
A type-C poset is a poset $\mathcal{P}=\{-n,\hdots,-1,1,\hdots, n\}$ such that
\begin{enumerate}
	\item if $i\preceq_{\mathcal{P}}j$, then $i\le j$; and
	\item if $i\neq -j$, then $i\preceq_{\mathcal{P}}j$ if and only if $-j\preceq_{\mathcal{P}}-i$.
\end{enumerate}
A type-D poset is a poset $\mathcal{P}=\{-n,\hdots,-1,1,\hdots, n\}$ satisfying 1 and 2 above as well as 
\begin{enumerate}
    \setcounter{enumi}{2}
    \item $-i\npreceq_{\mathcal{P}}i$.
\end{enumerate}
A type-B poset is a poset $\mathcal{P}=\{-n,\hdots,-1,0,1,\hdots, n\}$ satisfying 1 through 3 above. 
\end{definition}

\begin{Ex}\label{ex:typeBCD}
The poset $\mathcal{P}$ on $\{-3, -2, -1, 1, 2, 3\}$ defined by $-1\preceq 2,3$; $-2\preceq 1,3$; and $-3\preceq 1,2$ forms a type-C poset. The Hasse diagram of $\mathcal{P}$ is illustrated in Figure~\ref{fig:tBCD} \textup(left\textup).  The matrix form defining the corresponding type-C Lie poset algebra, denoted $\mathfrak{g}_C(\mathcal{P})$, is illustrated in Figure~\ref{fig:tBCD} \textup(right\textup). 
Clearly, $\mathfrak{g}_C(\mathcal{P})$ is two-step solvable. Furthermore, using Theorem~\ref{thm:commat}, it is straightforward to show that $\mathfrak{g}_C(\mathcal{P})$ is Frobenius. Thus, by Theorem~\ref{thm:h2}, $H^2(\mathfrak{g}_C(\mathcal{P}),\mathfrak{g}_C(\mathcal{P}))=0$. If equation \textup{(\ref{thm:CoG})} applied to $\mathfrak{g}_C(\mathcal{P})$, then $H^2(\mathfrak{g}_C(\mathcal{P}),\mathfrak{g}_C(\mathcal{P}))\neq 0$; this follows as the corresponding simplicial object would be the Hasse diagram of $\mathcal{P}$ which is homotopy equivalent to a circle.
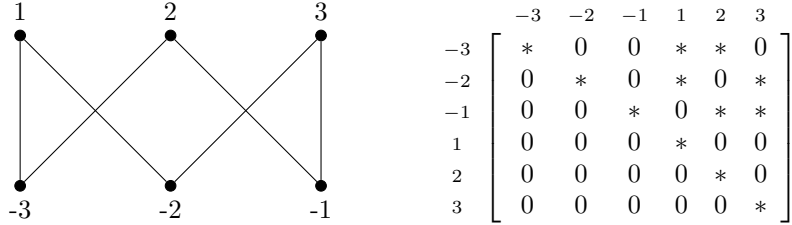
\begin{figure}[H]
$$\begin{tikzpicture}
	\node (-3) at (0, 0) [circle, draw = black, fill = black, inner sep = 0.5mm, label=below:{-3}]{};
	\node (-2) at (2, 0)[circle, draw = black, fill = black, inner sep = 0.5mm, label=below:{-2}] {};
	\node (-1) at (4, 0) [circle, draw = black, fill = black, inner sep = 0.5mm, label=below:{-1}] {};
	\node (1) at (0, 2) [circle, draw = black, fill = black, inner sep = 0.5mm, label=above:{1}]{};
	\node (2) at (2, 2)[circle, draw = black, fill = black, inner sep = 0.5mm, label=above:{2}] {};
	\node (3) at (4, 2) [circle, draw = black, fill = black, inner sep = 0.5mm, label=above:{3}] {};
	\node (5) at (7,1) {
  \kbordermatrix{
    & -3 & -2 & -1 & 1 & 2 & 3 \\
   -3 & * & 0 & 0 & * & * & 0  \\
   -2 & 0 & * & 0 & * & 0 & * \\
   -1 & 0 & 0 & * & 0 & * & * \\
   1 & 0 & 0 & 0 & * & 0 & 0 \\
   2 & 0 & 0 & 0 & 0 & * & 0 \\
   3 & 0 & 0 & 0 & 0 & 0 & * \\
  }
};
    \draw (1)--(-3)--(2);
    \draw (1)--(-2)--(3);
    \draw (2)--(-1)--(3);
    \addvmargin{1mm}
\end{tikzpicture}$$
\caption{Hasse diagram of $\mathcal{P}$ (left) and $\mathfrak{g}_C(\mathcal{P})$ (right)}\label{fig:tBCD}
\end{figure}
\end{Ex}

\noindent
Example~\ref{ex:typeBCD} suggest two questions:

\begin{itemize}
    \item Is there a more general isomorphism result (per the Classification theorem) relating all Lie poset algebras allowing for the use of the type-A result in (\ref{thm:CoG}).
    \item At the cohomological level, are all Lie poset algebras essentially simplicial or is this unique to the type-A setting?  If there is an ``appropriate'' simplicial object, what is it, and is it related to the nerve of a generating poset?
\end{itemize}

\section{Appendix-Spectrum}

\begin{theorem}
$\Phi_n$ has a binary spectrum.
\end{theorem}
\begin{proof}
Let $e_i^*$, for $i=1,\hdots,n$, denote the functional on $\Phi_n$ which returns the coefficient of $e_i$. A straightforward exercise in linear algebra shows that $f=\sum_{i=1}^ne^*_i$ is a Frobenius functional on $\Phi_n$ with principal element $\hat{f}=\sum_{i=1}^nd_i$. Calculating $[\hat{f},d_i]$ and $[\hat{f},e_i]$, for $i=1,\hdots,n$, establishes the result.
\end{proof}

\end{document}